\theoremstyle{plain} 
\newtheorem{thm}{Theorem}[section]
\newtheorem{lemma}[thm]{Lemma}
\newtheorem{cor}[thm]{Corollary}
\theoremstyle{definition} 
\newcommand{\C}{\mathbb{C}}
\newcommand{\mc}{\mathcal}
\renewcommand{\d}{\delta}
\newcommand{\e}{\varepsilon}
\renewcommand{\l}{\lambda}
\newcommand{\vp}{\varphi}
\renewcommand{\t}{\tau}
\newcommand{\set}[1]{\left\{#1\right\}}
\renewcommand{\o}{\overline}
\renewcommand{\r}{\rightarrow}
\newcommand{\pa}[2]{\frac{\partial #1}{\partial #2}}
\renewcommand{\Re}{\operatorname{Re}}
\newcommand{\pair}[1]{\left(#1\right)}
\renewcommand{\mod}[1]{\,(\operatorname{mod}{#1})}
\def\XXint#1#2#3{{\setbox0=\hbox{$#1{#2#3}{\int}$ }
\vcenter{\hbox{$#2#3$ }}\kern-.5\wd0}}
\begin{document}
\title{Hybrid Level Aspect Subconvexity for $GL(2)\times GL(1)$ Rankin-Selberg
  $L$-Functions}
\author{Keshav Aggarwal, Yeongseong Jo, and Kevin Nowland}
\address{Department of Mathematics, The Ohio State University, 100 Math Tower,
231 West 18th Avenue, Columbus, OH 43210-1174}
\email{aggarwal.78@osu.edu}
\address{Department of Mathematics, The Ohio State University, 100 Math Tower,
231 West 18th Avenue, Columbus, OH 43210-1174}
\email{jo.59@osu.edu}
\address{Department of Mathematics, The Ohio State University, 100 Math Tower,
231 West 18th Avenue, Columbus, OH 43210-1174}
\email{nowland.159@osu.edu}

\subjclass[2010]{11F11, 11F67, 11L05}
\keywords{Special values of $L$-functions, Rankin-Selberg convolution, 
subconvexity, $\delta$-method}

\begin{abstract}
 Let $M$ be a squarefree positive integer and $P$ a prime number coprime to
  $M$ such that $P\sim M^\eta$ with  $0 < \eta < 2/5$. We simplify the proof of subconvexity bounds for
  $L(\frac{1}{2},f\otimes\chi)$ when $f$ is a primitive holomorphic cusp form of level 
  $P$ and $\chi$ is a primitive Dirichlet character modulo $M$. These bounds are 
  attained through an unamplified second moment method using a modified version 
  of the delta method due to R. Munshi. The technique is similar to that used by
  Duke-Friedlander-Iwaniec save for the modification of the delta method.
\end{abstract}

\maketitle

\section{Introduction}
In studying the subconvexity problem for character twists of holomorphic modular 
forms of full level, Duke-Friedlander-Iwaniec \cite{dfi} introduced a simple yet 
powerful decomposition of the delta symbol which detects when an integer is zero. 
The starting point for their method was an amplified second moment 
average over primitive Dirichlet characters of a given level. The subconvexity 
problem for twisted $L$-functions $L(s,f\otimes \chi)$ in the conductor-aspect 
was also solved in their paper for the first time for $f$ a holomorphic cusp form
of level one with subconvexity exponent $1/2-1/22$ (following the computation of 
section 4.3 in \cite{mi07} for example).

Recent works achieving hybrid subconvexity bounds for Rankin-Selberg convolution $L$-functions of
large level include Kowalski-Michel-Vanderkam \cite{kmvdk}, Michel \cite{mi04} and Harcos-Michel
\cite{hami06}. Michel-Venkatesh \cite{mive10} solve the subconvexity problem for the $L$-functions
of $GL(1)$ and $GL(2)$ automorphic representations over a fixed number field, uniformly in all
aspects. Holowinsky-Munshi \cite{homu13} prove a hybrid level aspect bound for the $L$-function
coming from the convolution of two holomorphic modular forms of nontrivial levels, one being 
squarefree and the other being prime. 
Z. Ye \cite{zh14} relaxed the level conditions in \cite{homu13} to both levels square-free. Moreover, 
he used a Large Sieve inequality to establish a subconvexity bound for the full range of levels when both forms are holomorphic.
The works of Holowinsky-Munshi and Z. Ye relied on an application of Heath-Brown's refinement 
\cite{heathbrown} of the classical delta method due to Duke-Friedlander-Iwaniec \cite{dfi}. Browning-Munshi in \cite{brmu13}
introduce a modification of the delta method with factorization moduli to
obtain a structural advantage.

In this paper we follow the work of Duke-Friedlander-Iwaniec \cite{dfi} who studied the
Rankin-Selberg  convolution of a primitive Dirichlet 
character with a holomorphic modular form for the full modular group.
Their paper, as ours, uses a second moment average over primitive Dirichlet
characters of a given level. In this paper, we allow for holomorphic forms
with a range of permissible but nontrivial levels relative to the conductor of 
the Dirichlet character.  It is here that we make use of the modified delta method with a conductor lowering trick, 
based on the work of Munshi 
in \cite{munshi}.

Let $f$ be a primitive holomorphic cusp form of level $P$ and $\chi$ 
a primitive Dirichlet character modulo $M$. The Rankin-Selberg convolution 
$L(s,f\otimes\chi)$ is given by
\[ L(s,f\otimes\chi) = \sum_{n\geq1} \frac{\lambda_f(n)\chi(n)}{n^s}, \]
at least for $\Re(s)$ sufficiently large.
Subconvexity for these Rankin-Selberg $L$-functions
has already been established in \cite{co03,dfi,ha03,mi07}. The main point of this paper, 
however, is to demonstrate how using a modified application of the delta method simplifies arithmetic 
structure and lengthens the admissible hybrid range of the level parameters $M$ and $P$.
Specifically, we present a simple method which ends with a trivial application of the Weil bound 
for Kloosterman sums and establishes subconvexity for the hybrid range $P=M^{\eta}$
for $0<\eta<2/5$. Using the classical delta method without the conductor lowering trick 
and following the same process, one would obtain a hybrid range of $0<\eta<2/7$. 
In the meanwhile, Blomer and Harcos in theorem 2 of \cite{blha14} establish the following hybrid estimate
\[
  L\left(\frac{1}{2}, f \otimes \chi \right) \ll_{{k,\e}} P^{\frac{1}{4}+\e}M^{\frac{3}{8}+\e}
  +P^{\frac{1}{2}+\e}M^{\frac{1}{4}+\e}.
\]
For $P=M^{\eta}$, we obtain that
\[
  L\left(\frac{1}{2}, f \otimes \chi \right) \ll_{{k,\e}} \mc Q^{\frac{1}{4}+\e}
  \left( \mc Q^{-\frac{1}{8(2+\eta)}-\frac{1-\eta}{4(2+\eta)}} \right),
\]
where $\mc Q=\mc Q(f \otimes \chi)=PM^2$ is the size of the conductor of $L$-function $L(s,f\otimes \chi)$. 
The hybrid range $0 <  \eta < 1$ of Blomer and Harcos is stronger than
our range $0 < \eta <2/5$. However we emphasize that our technique does not require amplification or Large Sieve inequality
and recently this method is adopted in \cite{HoZh} to extend a hybrid subconvexity range bound for $L(1/2,g \otimes h)$ 
where $g$ is a primitive holomorphic cusp form of level $M$
 and $h$ is a primitive either holomorphic or Mass cusp form of level $P$ with $(M,P)=1$, $M$
 a squarefree integer, and $P$ a prime. 
 
Of course, one has the ability to push the analysis further, in either method, by analyzing the resulting 
sum of Kloosterman sums through Large Sieve inequality similar to the work of Z. Ye. Again,
however, there is an advantage in the modified delta method in that we obtain sums of 
standard Kloosterman sums for the group $\Gamma_0(P)$ associated to the cusp at $\infty$.
Without the modified delta method, one would instead get Kloosterman sums associated to 
the cusps $0$ and $\infty$ and then more work is required 
(using the work of Deshoullier-Iwaniec \cite{desiwa} for example). 

We provide a sketch of
these arguments below and note that our methods may also be applied to analogous Rankin-Selberg convolutions.

\subsection{Holomorphic cusp forms}
Let $P>0$ be a prime number and $k>0$ an even integer. Let $S_k(P)$ be the
linear space of holomorphic cusp forms of weight $k$, level $P$, and trivial
nebentypus. We let $\Gamma_0(P)$ be the Hecke congruence subgroups defined by
\[
  \Gamma_0(P)=\left\{ \begin{pmatrix} a& b\\c&d\end{pmatrix}
  \in SL(2,\mathbb{Z})\; \middle| \; c \equiv 0 \mod 
  P \right\}
\]
If $f\in S_k(P)$, then $f:\mathbb{H}\r\C$  is holomorphic and 
satisfies
\[
  f(g z) = (cz+d)^kf(z)
\]
for every $g=\pair{\begin{matrix}a&b\\c&d\end{matrix}}\in\Gamma_0(P)$ acting
as a linear fractional transformation on $z$ in the upper half-plane 
$\mathbb{H}$. Additionally, $f$ vanishes at every cusp. Such an $f$ has Fourier 
expansion
\[
  f(z) = \sum_{n=1}^\infty \psi_f(n)n^{\frac{k-1}{2}}e(nz),
\]
where $e(x):=\exp(2\pi ix)$, and the Fourier coefficients $\psi_f(n)$ satisfy
\[
  \psi_f(n)\ll_f \t(n) n^\e,
\]
with $\t(n)$ the divisor function for $\e>0$ arbitrary. This was proved by 
Deligne \cite{del74}.

$S_k(P)$ is a finite dimensional Hilbert space with respect to the Petersson
inner product. $S_k(P)$ has an orthogonal basis $B_k(P)$ which consists of 
eigenfunctions of all Hecke operators $T_n$ such that $(n,P)=1$, where $T_n$
acts on $f$ by the formula
\[
  T_nf(z) = \frac{1}{\sqrt{n}}\sum_{\substack{ad=n\\(a,P)=1}}
  \pair{\frac{a}{d}}^{k/2}\sum_{b\mod{d}}f\pair{\frac{az+b}{d}}
  =: \l_f(n)f.
\]
Such an $f$ is called a \emph{Hecke eigen cusp form}. The Hecke operators are 
multiplicative and satisfy
\[
  \psi_f(m)\l(n) = \sum_{d\mid(m,n)}\psi_f\pair{\frac{mn}{d^2}}
\]
for $n$ coprime with $P$. In particular, $\psi_f(1)\l_f(n)=\psi_f(n)$
for $(n,P)=1$. We normalize such that $\psi_f(1)=1$ and we have
$\l_f(n)=\psi_f(n)$ for $(n,P)=1$. There exists a subset $B^*_k(P)$ of 
$B_k(P)$ of \emph{newforms} or  \emph{primitive holomorphic cusp forms} which are eigenfunctions of all Hecke
operators $T_n$ for $n\geq1$ with $\l_f(n)=\psi_f(n)$. Let $f \in B^*_k(P)$
be a newform and $\chi$ a primitive Dirichlet character modulo $M$ with $(P,M)=1$.
Let $f \otimes \chi$ be a twisted modular form on $\mathbb{H}$ given by the Fourier 
expansion
\[
   (f \otimes \chi)(z)=\sum_{n=1}^{\infty} \chi(n) \lambda_f(n)n^{\frac{k-1}{2}} e(nz).
\]
Then $f \otimes \chi$ is a newform of level $PM^2$.

\subsection{Rankin-Selberg \texorpdfstring{$L-$}-functions}
Let $f\in B_k^*(P)$ be a newform and $\chi$ a primitive Dirichlet character
modulo $M$ where $M$ and $P$ are coprime, $M$ is squarefree, and $P$ is a
prime. Then the Rankin-Selberg convolution $L$-function associated to
$f\otimes\chi$ is 
\[
  L(s,f\otimes\chi) = \sum_{n=1}^\infty \frac{\l_f(n)\chi(n)}{n^s}.
\]
The associated completed $L$-function is
\[
  \Lambda(s,f\otimes\chi) = \mc Q^sL_\infty(s,f\otimes\chi)L(s,f\otimes \chi),
\]
where $\mc Q=\mc Q(f\otimes\chi)= PM^2$ and the local factor at infinity 
$L_\infty$ is a product of gamma functions. The approximate functional equation 
shows that the special value 
$L(1/2, f\otimes \chi)$ is given by
\[
  L(1/2, f\otimes \chi)=\sum_{n=1}^{\infty}
    \frac{\lambda_{f}(n)\chi(n)}{\sqrt{n}}
    V\pair{\frac{n}{\sqrt{\mathcal{Q}}}}
    +\epsilon(f \otimes \chi)\sum_{m=1}^{\infty}
    \frac{\lambda_f(m)\overline{\chi(m)}}{\sqrt{m}}
    V\pair{\frac{m}{\sqrt{\mathcal{Q}}}}
\]
(see chapter 5 of \cite{iwko04}) where $V$ is a smooth function with rapid decay 
at infinity, and for any positive integer $A$, 
the derivatives of $V(y)$ satisfy
\[
  y^jV^{(j)}(y) \ll_{k} \mathcal{Q}^{\e}(1+y)^{-A}\log(2+y^{-1})
\]
for any $\e>0$. We also have the asymptotic
\[
  V(y)=1 + \mathnormal{O}
  \left(\left(\frac{y}{\sqrt{\mathcal{Q}}}\right)^{\alpha}\right)
\]
for $\alpha > 0$.
Let $h$ be a smooth function which is compactly supported on $[1/2,5/2]$ 
with bounded derivatives and suppose that $X$ runs over $2^{\nu}$ with 
$\nu=-1,0,1,2,3,\ldots$. Applying a smooth partition of unity and 
the asymptotic for $V$, we are left with
\[
  \left| L(1/2, f \otimes \chi) \right| \ll_{k,\e} \mathcal{Q}^\e 
  \left\{ \underset{\mc{Q}^{\frac{1}{2}-\d}
  \leq X \leq \mc{Q}^{\frac{1}{2}+\e}}{\max} |L_{f \otimes \chi}(X)| +
  \mc{Q}^{\frac{1}{4}-\frac{\d}{2}}\right\},
\]
where
\begin{equation} \label{twisted-Lfunction}
  L_{f \otimes \chi}(X) := 
  \sum_{n} \frac{\l_{f}(n)\chi(n)}{\sqrt{n}}h\left(\frac{n}{X}\right).
\end{equation}

\section{Statement of main results}
 We prove the hybrid range and subconvexity bounds for $L(1/2,f \otimes \chi)$. 
 We average over primitive 
 characters modulo $M$ through a second moment method to achieve subconvexity 
 bounds. 

\begin{thm} [Second Moment] 
  Let $P$ and $M$ be coprime positive integers with $P$ prime and $M$ 
  squarefree. Let $k$ be a fixed positive even integer. Set $\mathcal{Q}=PM^2$.
  Let $h$ be a smooth function with support in $[1/2,5/2]$ and 
  bounded derivatives. Let $\e, \d >0$ and choose any $X$ such that
  $\mathcal{Q}^{\frac{1}{2}-\d}\leq X \leq \mathcal{Q}^{\frac{1}{2}+\e}$. 
  If $f \in B^*_k(P)$ and $\chi$ is a primitive Dirichlet character 
  modulo $M$, then
  \begin{equation}\label{sec-moment}
    \frac{1}{\varphi^\star(M)}\underset{\chi\mod{M}}{{\sum}^{\star}}
      |L_{f \otimes \chi}(X)|^2 
    \ll_{k,\e} \mathcal{Q}^{\e}
       \left(
       1+\frac{\mathcal{Q}^{\frac{1}{2}}}{M}\cdot
       \frac{P^{\frac{5}{8}+\frac{\d}{4}}}{M^{\frac{1}{4}-\frac{\d}{2}}}
       \right).
  \end{equation}
\end{thm}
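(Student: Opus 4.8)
The plan is to open the square, exploit orthogonality of primitive characters to isolate a diagonal main term, and attack the resulting off-diagonal shifted convolution with Munshi's modified delta method. First I would expand
\[
  |L_{f\otimes\chi}(X)|^2 = \sum_{n,m}\frac{\l_f(n)\l_f(m)\chi(n)\o{\chi(m)}}{\sqrt{nm}}
  \,h\!\pair{\frac{n}{X}}h\!\pair{\frac{m}{X}},
\]
average over $\chi$, and apply the orthogonality relation for primitive characters modulo $M$. Since $M$ is squarefree, Möbius inversion over the conductor divisors $d\mid M$ reduces the character sum, for $(nm,M)=1$, to the congruence $n\equiv m\pmod d$ weighted by $\mu(M/d)\varphi(d)$; after dividing by $\varphi^\star(M)$ the top term $d=M$ dominates, and the proper divisors contribute strictly smaller moduli handled identically with room to spare.

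The diagonal contribution $n=m$ yields, after cancelling the weight $\varphi^\star(M)$,
\[
  \sum_{(n,M)=1}\frac{\l_f(n)^2}{n}\,h\!\pair{\frac{n}{X}}^2 \ll_{k,\e} \mc Q^{\e}
\]
by the Rankin-Selberg estimate $\sum_{n\le X}\l_f(n)^2\ll X^{1+\e}$ together with partial summation. This produces the $1$ in \eqref{sec-moment}.

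The off-diagonal $n\equiv m\pmod M$ with $n\ne m$ is a shifted convolution, and this is the heart of the argument. I would separate the two variables by Munshi's delta method with the conductor-lowering trick, using the free congruence modulo $M$ supplied by orthogonality to reduce the effective modulus in the circle-method expansion; this replaces the oscillation-free correlation by sums carrying additive characters $e\!\pair{a(n-m)/q}$ over a range of moduli governed by an auxiliary parameter that I will optimize at the end. I would then apply Voronoi summation in both the $n$ and the $m$ variables at level $P$. Because the twist by $\chi$ has already been removed and the conductor lowering keeps the relevant cusp at $\infty$, the dual sums are governed by \emph{standard} Kloosterman sums for $\Gamma_0(P)$ rather than sums attached to the cusp $0$; this is the structural simplification the modified method buys, and the Deligne bound on $\l_f$ together with the rapid decay of the Voronoi kernel truncates the dual lengths.

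Finally I would insert the Weil bound for the resulting Kloosterman sums, sum trivially over the surviving variables, and balance the auxiliary delta-method parameter against the dual lengths. The main obstacle is precisely this optimization: one must track the dependence on $P$ and $M$ separately through the two Voronoi transforms and the Kloosterman modulus so that the exponents collapse to $\tfrac{\mc Q^{1/2}}{M}\cdot\tfrac{P^{5/8+\d/4}}{M^{1/4-\d/2}}$. Keeping the $P$- and $M$-powers distinct, rather than merging them into $\mc Q$, is exactly what secures the hybrid range $0<\eta<2/5$.
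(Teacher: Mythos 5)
Your opening and diagonal steps are sound: expanding the square and using orthogonality of primitive characters (via the M\"obius expansion over $d\mid M$, where the paper instead writes the characters in terms of Gauss sums and uses positivity) reduces the problem to the diagonal, which gives the term $1$ in \eqref{sec-moment}, plus shifted convolutions $m=n+rM$ with $0\neq|r|\ll \mathcal{Q}^{1/2+\e}/M$. The genuine gap is in your treatment of those shifted sums: you place the conductor-lowering congruence at the wrong modulus. First, the congruence modulo $M$ is not ``free'': once the off-diagonal is parametrized by shifts, $n$ and $m$ range freely, and for each fixed $r$ the equation $n-m+rM=0$ must be detected from scratch; writing it as $M\mid(n-m+rM)$ together with $\delta\pair{\frac{n-m+rM}{M},0}$ costs a full sum of additive characters modulo $M$, exactly as a congruence to any other modulus would. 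Second, and decisively, the paper lowers the conductor modulo $P$, the \emph{level}: the circle-method moduli are then $qP$ with $q\leq Q=(8\max\{X,Y\}/P)^{1/2}$, Voronoi summation (lemma \ref{voronoi}) applies with $P_2=P/(P,qP)=1$, and one obtains the standard sums $S(rM,m-n;qP)$ of \eqref{newmain}. With your choice the moduli are $qM$, which are \emph{coprime} to $P$, so $P_2=P$: the dual variables acquire the twist $\overline{P}$, the Kloosterman sums become $S(rM,(m-n)\overline{P};qM)$ --- precisely the cusp-$0$--$\infty$ sums of \eqref{oldmain} that the modified method is designed to eliminate --- and the dual lengths are multiplied by $P$ (of size roughly $PM$ rather than $P$). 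So the structural claim in your third paragraph is exactly backwards for the setup you propose.

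The numerology then fails, and not marginally. Running the paper's own estimation scheme (truncation via lemma \ref{jbound}, the Weil bound, geometric mean of the minimum) with moduli $qM$ and $q\leq Q''$, $Q''^2=8\max\{X,Y\}/M$, gives a per-shift bound of order
\[
  PM^{\frac{3}{4}}\,\frac{\max\{X,Y\}^{\frac{3}{4}}}{(XY)^{\frac{1}{2}}}
  \qquad\text{in place of}\qquad
  P^{\frac{3}{4}}\,\frac{\max\{X,Y\}^{\frac{3}{4}}}{(XY)^{\frac{1}{2}}}
\]
from theorem \ref{shifted-sums-thm}, a loss of $P^{\frac{1}{4}}M^{\frac{3}{4}}$. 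Summed over the $\asymp\mathcal{Q}^{1/2+\e}/M$ shifts, this is worse than what the \emph{unmodified} delta method yields (the $0<\eta<2/7$ range) and does not even recover convexity, let alone the stated term $\frac{\mathcal{Q}^{1/2}}{M}\cdot\frac{P^{5/8+\d/4}}{M^{1/4-\d/2}}$. The point to internalize is that the trick's advantage requires $P$ to divide the delta-method modulus: the $1/P$ from detecting the congruence is exactly offset by the enlargement of $c_Q/Q^2$ when $Q^2$ shrinks by $P$; the net quantitative gain, worth $P^{1/4}$, comes from the Weil bound interacting with the shortened range $q\leq(8\max\{X,Y\}/P)^{1/2}$, and the structural gain is $P_2=1$, i.e.\ untwisted Kloosterman sums. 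Lowering modulo $M$ buys neither. You also leave the auxiliary parameter unspecified (its value $Q^2=8\max\{X,Y\}/P$ is forced by the support condition on $g$ in lemma \ref{dmethod}), omit the necessary case analysis $(q,P)=1$, $P\mid\gamma$, $P\mid q$, and never verify $(r,P)=1$, which the paper's gcd estimate requires and which holds because $|r|\ll P^{1/2}M^{\e}$.
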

$\sum^{\star}$ means summation over primitive characters 
or over integers coprime with the specified modulus, and $\vp^{\star}$ is 
the number of primitive multiplicative characters modulo $M$.  
We apply the theorem below to $F(x,y)=h(x)h(y)$, $f_1=f_2$, and $X=Y$ for any 
$\mathcal{Q}^{\frac{1}{2}-\d}\leq X \leq \mathcal{Q}^{\frac{1}{2}+\e}$ to obtain
a second moment bound.
The first and second terms on the right hand side in \eqref{sec-moment} come from 
a zero shift and theorem \ref{shifted-sums-thm} on shifted convolution sums, 
respectively. Notice that if $P=1$, then this matches the bound in \cite{dfi} 
without amplification.

\begin{thm} [Shifted Convolution Sums]\label{shifted-sums-thm}
  Let $f_1,f_2$ be newforms in $B_k^*(P)$. Let $M$ be a positive squarefree
  integer coprime with $P$. Let $r\neq0$ be an integer coprime with $P$.
  Let $X,Y\geq1$ and $F$ a smooth function supported on $[1/2,5/2]\times
  [1/2,5/2]$ satisfying
  \[
    x^iy^j\pa{^i}{x^i}\pa{^j}{y^j}F\pair{\frac{x}{X},\frac{y}{Y}}
    \ll ZZ_x^iZ_y^j,
  \]
  for $Z>0$ and $Z_x,Z_y\geq1$. Then
  \begin{align}
    \sum_n
    \sum_{m=n+rM} \frac{\l_{f_1}(n)\l_{f_2}(m)}{\sqrt{nm}}
      F\pair{\frac{n}{X},\frac{m}{Y}} \ll_{k,\e}
      (PMr)^\e Z(Z_xZ_y)^{\frac{1}{2}}\max\set{Z_x,Z_y}^2 P^{\frac{3}{4}}    \frac{\max\set{X,Y}^{\frac{3}{4}}}{(XY)^{\frac{1}{2}}}.
      \label{shifted}
  \end{align}
\end{thm}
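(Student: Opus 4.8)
The plan is to detect the linear condition $m-n=rM$ by Munshi's conductor-lowering variant of the Duke--Friedlander--Iwaniec delta method \cite{dfi,munshi}, and then to dualize both variables by $\GL(2)$ Voronoi summation. First I would rewrite the left-hand side of \eqref{shifted} as the unrestricted double sum
\[
  S = \sum_{n}\sum_{m} \frac{\l_{f_1}(n)\l_{f_2}(m)}{\sqrt{nm}}\,
      F\pair{\frac{n}{X},\frac{m}{Y}}\,\delta(m-n-rM),
\]
and observe that the equation being detected forces $m\equiv n \mod M$; building this known congruence into the additive characters is exactly the conductor-lowering mechanism. Concretely, one detects the vanishing of $m-n-rM$, of size $\ll\max\set{X,Y}$, as the vanishing of $(m-n-rM)/M$, of size $\ll\max\set{X,Y}/M$, so the delta expansion produces a sum over moduli $q\le Q$ with $Q\approx(\max\set{X,Y}/M)^{1/2}$, additive characters running to modulus $qM$, a coprime residue $a\bmod q$, and an auxiliary $v$-integral carrying the weight. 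Crucially this separates the two variables, leaving factors $e\pair{-an/(qM)}$ and $e\pair{am/(qM)}$ attached to the $n$- and $m$-sums respectively.

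Next I would apply the $\GL(2)$ Voronoi summation formula for the level-$P$ newforms $f_1,f_2$ to each of the two resulting sums. Since $q$, $M$, and $r$ are coprime to $P$, the modulus $qM$ is coprime to $P$, so Voronoi dualizes each smooth sum of length $\sim X$ (resp.\ $\sim Y$) into a dual sum weighted by $\l_{f_1}(\tilde n)$ (resp.\ $\l_{f_2}(\tilde m)$), an arithmetic factor (a Gauss sum times an additive character), and a Bessel-type integral transform. The point of the conductor lowering is that the dual lengths, of size roughly $(qM)^2P/X$ and $(qM)^2P/Y$, are kept short by the restriction $q\le Q$; this is precisely what lengthens the admissible range of $\eta$ relative to the unmodified method.

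Finally, after both applications of Voronoi I would combine the two Gauss/character factors with the residue sum $\sum_{a\bmod q}^{\star}$ coming from the delta method. Reciprocity together with the coprimality $(qM,P)=1$ should collapse this product into a single standard Kloosterman sum for $\Gamma_0(P)$ attached to the cusp at infinity, to which the Weil bound applies trivially; this is the advantage emphasized in the introduction over the cusp-$0$ sums one would otherwise meet. It then remains to insert the Weil bound, estimate the $v$-integral and the two Bessel transforms (which control the effective dual ranges and contribute the derivative factors $Z$, $Z_x$, $Z_y$), sum trivially over $q$, $\tilde n$, and $\tilde m$, and optimize in $Q$. Balancing the powers of $P$ produced by the two level-$P$ Voronoi Gauss sums against the Weil savings should yield the stated factor $P^{3/4}\max\set{X,Y}^{3/4}/(XY)^{1/2}$. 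I expect the main obstacle to be the bookkeeping of this last step: simplifying the product of the two Voronoi character sums to a genuine Kloosterman sum, and tracking the archimedean transforms carefully enough that the dual-length truncations, and hence the final exponents, come out sharp.
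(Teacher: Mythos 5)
Your proposal goes wrong at the very first step, and the error is fatal rather than cosmetic: you perform the conductor lowering with $M$, whereas the paper's entire mechanism depends on lowering with the level $P$. The paper detects $n-m+rM=0$ as the pair of conditions $(n-m+rM)/P=0$ (delta method, lemma \ref{dmethod}, with $Q^2=8\max\set{X,Y}/P$) and $n-m+rM\equiv 0 \pmod P$ (additive characters mod $P$), so that the combined characters run over residues $\gamma$ modulo $qP$ --- a modulus \emph{divisible by the level}. That divisibility is what makes Voronoi summation efficient: in lemma \ref{voronoi} one gets $P_2=P/(P,qP)=1$, so the dual sums have length about $P^2q^2/X\ll P$ (for $X\asymp Y$), no $\bar P$-twist appears, and the $\gamma$-sum collapses to the standard Kloosterman sum $S(rM,m-n;qP)$ of \eqref{newmain}. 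In your version the modulus $qM$ is coprime to $P$, so $P_2=P$: the dual characters carry $\overline{\gamma P}$, the $\gamma$-sum produces the twisted sums $S(rM,(m-n)\bar P;qM)$ --- exactly the cusp-$0$--$\infty$ sums of \eqref{oldmain} that the modified method is designed to avoid, contradicting your claim that reciprocity yields standard cusp-$\infty$ sums --- and the dual lengths are $(qM)^2P/X$, which at the top of your range $q\approx(\max\set{X,Y}/M)^{1/2}$ is about $MP$, a factor $M$ longer than the paper's. Your congruence mod $M$ is ``known,'' but that is irrelevant: conductor lowering only pays off when the auxiliary modulus divides the conductor of the forms being dualized.

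Quantitatively, carrying out the rest of your plan (truncation, Weil bound, trivial summation) with your parameters gives roughly $\mc Q^\e Z(Z_xZ_y)^{1/2}\max\set{Z_x,Z_y}^2\,M^{3/4}P\,\max\set{X,Y}^{3/4}/(XY)^{1/2}$, and even if one exploits the fact that the $M$-part of $S(rM,(m-n)\bar P;qM)$ degenerates into a Ramanujan sum, the best available is about $M^{1/4}P\max\set{X,Y}^{3/4}/(XY)^{1/2}$. Both miss the target $P^{3/4}\max\set{X,Y}^{3/4}/(XY)^{1/2}$ of \eqref{shifted} by a positive power of $M$; indeed both are \emph{worse} than using no conductor lowering at all, which gives $P\max\set{X,Y}^{3/4}/(XY)^{1/2}$ and the $2/7$-range. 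The reason is that ``lowering'' by $M$ actually raises the total character modulus to about $(\max\set{X,Y}M)^{1/2}$ (versus $\max\set{X,Y}^{1/2}$ unmodified and $(\max\set{X,Y}P)^{1/2}$, but aligned with the level, in the paper) while buying no compatibility with Voronoi. The repair is to replace your detection by $\delta\pair{(n-m+rM)/P,0}$ times the condition $n-m+rM\equiv 0\pmod P$, take $Q^2=8\max\set{X,Y}/P$, and split into the cases $(q,P)=1$ with $(\gamma,P)=1$, $(q,P)=1$ with $P\mid\gamma$, and $P\mid q$; with that change, the remainder of your outline (double Voronoi, Weil, trivial summation over $q$ and the dual variables) does deliver the stated bound.
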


The second moment bound which is better by a power of 
$\mc Q$ than the convexity bound means that we must have a bound which is better than
\[
  \frac{1}{\varphi^\star(M)}\underset{\chi\mod{M}}{{\sum}^{\star}}
  |L_{f \otimes \chi}(X)|^2 
  \ll \frac{\mathcal{Q}^{\frac{1}{2}+\e}}{M}.
\]
Therefore the bounds in \eqref{sec-moment} induce 
the hybrid subconvexity range $P\sim M^\eta$ for $0<\eta<2/5$.

\begin{cor}[Subconvexity]
	Let $f$ and $\chi$ be as above with $\eta=\frac{\log P}{\log M}$. Then
    \[
    	L(1/2,f\otimes\chi) \ll_{k,\e}
        \frac{\mc Q^{\frac{1}{4}+\e}}{\mc Q^{\frac{2-5\eta}{20(2+\eta)}}}.
    \]
    This produces a subconvex bound for $0<\eta<2/5$.
\end{cor}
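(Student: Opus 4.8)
The plan is to convert the averaged estimate of the Second Moment Theorem into a pointwise bound by positivity, and then to optimize the free parameter $\d$. Recall that the approximate functional equation reduction recorded at the end of Section~1 already gives
\[
  |L(1/2,f\otimes\chi)| \ll_{k,\e} \mc Q^\e\left(\max_{\mc Q^{\frac12-\d}\le X\le \mc Q^{\frac12+\e}} |L_{f\otimes\chi}(X)| + \mc Q^{\frac14-\frac\d2}\right),
\]
so it suffices to control a single $|L_{f\otimes\chi}(X)|$ in the stated range of $X$. Since every term $|L_{f\otimes\chi'}(X)|^2$ is nonnegative, I would drop all but the $\chi'=\chi$ summand, so that $|L_{f\otimes\chi}(X)|^2 \le \sum_{\chi'}^{\star} |L_{f\otimes\chi'}(X)|^2$, feed in the Second Moment Theorem, and use the trivial count $\varphi^\star(M)\le \varphi(M) < M$. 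Taking a square root and using $(1+Y)^{1/2}\ll 1+Y^{1/2}$ then yields
\[
  |L(1/2,f\otimes\chi)| \ll_{k,\e} \mc Q^\e\left(M^{\frac12} + \frac{\mc Q^{\frac14}\,P^{\frac{5}{16}+\frac\d8}}{M^{\frac18-\frac\d4}} + \mc Q^{\frac14-\frac\d2}\right).
\]

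The next step is bookkeeping: substitute $P=M^\eta$ and $\mc Q=PM^2=M^{2+\eta}$, so that $M=\mc Q^{1/(2+\eta)}$, and rewrite each of the three surviving terms as a power of $\mc Q$. The diagonal term becomes $\mc Q^{1/(2(2+\eta))}$, which is already subconvex for every $\eta>0$ because $1/(2(2+\eta))<1/4$; this is exactly the place where the growth of the level $P$ is exploited, and it is why no amplification is required. The shifted-convolution term becomes $\mc Q^{\frac14+\frac{5\eta-2}{16(2+\eta)}+\frac\d8}$, and the error term is $\mc Q^{\frac14-\frac\d2}$.

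The heart of the matter is the optimization in $\d$, which I expect to be the only delicate point, though it is elementary once the two main theorems are in hand. The exponent of the shifted-convolution term increases in $\d$ while that of the error term decreases, so I would balance them by equating exponents. The pleasant feature is that the coefficient of $\d$ in the shifted-convolution exponent works out to exactly $\tfrac18$, since $\frac{\eta/8+1/4}{2+\eta}=\frac18$; balancing therefore gives $\tfrac{5\d}{8}=\frac{2-5\eta}{16(2+\eta)}$, hence $\d=\frac{2-5\eta}{10(2+\eta)}$, which is positive precisely when $\eta<2/5$ and is the source of the admissible hybrid range. Substituting back into the error exponent $\tfrac14-\tfrac\d2$ produces $\tfrac14-\frac{2-5\eta}{20(2+\eta)}$, the claimed bound. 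I would close by noting that the true bound is the maximum of this exponent and the diagonal exponent $1/(2(2+\eta))$; both are strictly below $1/4$ throughout $0<\eta<2/5$, so subconvexity holds on the full range, with the displayed shifted-convolution exponent being the binding one once $\eta$ is not too small and the diagonal term governing the (still subconvex) bound for very small $\eta$.
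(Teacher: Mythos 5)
Your proposal is correct and follows essentially the same route as the paper: reduce via the approximate functional equation, drop to a single character by positivity (losing $\varphi^\star(M)\le M$), insert the second moment bound, and balance the shifted-convolution term against $\mc Q^{\frac14-\frac\d2}$ to get $\d=\frac{2-5\eta}{10(2+\eta)}$. Your closing remark is in fact more careful than the paper's own proof, which waves off the diagonal with the phrase that it ``provides saving for any $P$ and $M$'': as you note, the diagonal exponent $\frac{1}{2(2+\eta)}=\frac{5}{10(2+\eta)}$ exceeds the displayed exponent $\frac14-\frac{2-5\eta}{20(2+\eta)}=\frac{4+5\eta}{10(2+\eta)}$ precisely when $\eta<1/5$, so for small $\eta$ the diagonal is the binding (still subconvex) term and the corollary's stated bound should really be read as the maximum of the two.
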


\begin{proof}
From the reduction in section 1, 
\begin{equation}\label{aprox-fcn-equ}
  |L(1/2,f\otimes\chi)| \ll_{k,\e} \mc Q^\e\left\{\mc Q^{\frac{1}{4}-\frac{\d}{2}}
  + \max_{\mc Q^{\frac{1}{2}-\d}\leq X \leq \mc Q^{\frac{1}{2}+\e}}
  |L_{f\otimes\chi}(X)|\right\}.
\end{equation}
To bound this, weaken 
the bound in the above theorem by just taking the second term in 
\eqref{sec-moment} as the first provides saving for any $P$ and $M$ having size. 
This gives 
\[
  |L_{f\otimes\chi}(X)| \ll \mc Q^{\frac{1}{4}+\e}
  \frac{P^{\frac{5+2\d}{16}}}{M^{\frac{2-4\d}{16}}}.
\]
Equate the first term and second terms on the right hand side of \eqref{aprox-fcn-equ}
while replacing $P$ and $M$ with powers of $\mc Q$ by using $P=M^\eta$. 
We have $P=\mathcal{Q}^{\frac{\eta}{2+\eta}}$ and $M=\mathcal{Q}^{\frac{1}{2+\eta}}$.
Therefore the optimal choice of $\delta$ satisfies
\[
  -\frac{\delta}{2}=\frac{5+2\delta}{16}\cdot\frac{\eta}{2+\eta}
  -\frac{2-4\d}{16}\cdot\frac{1}{2+\eta}.
\]
Then the saving $\d$ is then explicitly calculated to be $\d=\frac{2-5\eta}{10(2+\eta)}$. 
\end{proof}

\section{Proof Sketch}
We give a quick sketch of the proof which results in the $0<\eta<2/7$
bound when we do not use the conductor lowering trick followed by the sketch to get
the improved range $0<\eta<2/5$.

The standard approximate functional equation argument in Section 1 reduces
our $L$-function to the analysis of the following sum
\[
  \sum_{n\sim \sqrt{P}M}\frac{\l_f(n)\chi(n)}{\sqrt{n}}.
\]
A second moment average over primitive characters leads us to needing to understand the sum
\[
    \sum_{n\sim\sqrt{P}M}\frac{\l_f(n)}{\sqrt{n}}
    \sum_{\substack{m\sim\sqrt{P}{M}\\m\equiv n\mod{M}}}
    \frac{\l_f(m)}{\sqrt{m}}.
\]
Using the delta symbol, we rewrite the above as
\[
  \sum_{0\neq|r|\leq\sqrt{P}}\sum_{n\sim\sqrt{P}M}
    \frac{\l_f(n)}{\sqrt{n}}\sum_{m\sim\sqrt{P}M}\frac{\l_f(m)}
    {\sqrt{m}}\d(m-n+rM,0),
\]
where the $r=0$ term can be bounded trivially. 
Applying the usual delta method and using Voronoi summation on both the $n$- and $m$-sums, 
one can obtain Kloosterman sums of the form
\begin{equation}
\label{oldmain}
  \sum_{0\neq|r|\leq\sqrt{P}}\frac{1}{Q}\sum_{q\leq Q}\frac{1}{q}
    \sum_{n\sim P}\frac{\l_f(n)}{\sqrt{n}}\sum_{m\sim P}\frac{\l_f(m)}{\sqrt{m}}
    S(rM,(n-m)\bar P;q).
\end{equation}
with  $Q=M^{1/2}P^{1/4}$ the square root of the size of the equation. 
Applying the Weil bound for each Kloosterman sum leads to the of second moment average being
bounded by
$\sqrt{P}\pair{\frac{P^7}{M^2}}^{1/8}$, 
such that $0<P<M^{2/7}$ is a range for subconvexity, as a bound of $\sqrt{P}$ would produce 
the convexity bound.

However, using the conductor lowering trick by instead
using $\d((n-m+rM)/P,0)$ with the condition $n-m+rM \equiv 0 \mod P$ followed by
Voronoi summation as in the previous case, 
one instead obtains the sum of Kloosterman sums
\begin{equation}
\label{newmain}
  \sum_{0\neq|r|\leq\sqrt{P}}\frac{1}{Q}\sum_{q\leq Q}\frac{1}{qP}
    \sum_{n\sim P}\frac{\l_f(n)}{\sqrt{n}}\sum_{m\sim P}\frac{\l_f(m)}{\sqrt{m}}
    S(rM,n-m;qP).
\end{equation}
The length of the $n$- and $m$-sums is the same as before, whereas we have additional saving 
$P$ in the denominator on the $q$-sum, which comes from detecting the congruence condition
$n-m+rM \equiv 0 \mod P$. The Weil bound for Kloosterman sums allows us to have a better bound by $P^{1/2}$. 
However, $Q$ has also changed and is
now $M^{1/2}/P^{1/4}$ as the equation in the delta symbol has changed and the division by
$Q$ -- which ends up a division by $Q^{1/2}$ because of the Weil bound -- means we only save
$P^{1/4}$. Dividing the previous bound by this gives $\sqrt{P}\pair{\frac{P^5}{M^2}}^{1/8}$.
The subconvexity range has been improved to $0<P<M^{2/5}$.

Let $\sigma$ be the element in $\Gamma_0(P)$. If $\textbf{a}$ is a cusp of $\Gamma_0(P)$, its stabilizer is defined by $\Gamma_{\textbf{a}}=\{ \sigma \in \Gamma_0(P) \;|\; \sigma \cdot \textbf{a}=\textbf{a}  \}$. In particular $\displaystyle \Gamma_{\infty}=\left\{ \pm \begin{pmatrix} 1 & b \\ & 1 \end{pmatrix}\; \middle| \;b \in \mathbb{Z}  \right\}$. Let $\textbf{a}$ and $\textbf{b}$ be two cusps of $\Gamma_0(P)$. Let $\sigma_{\textbf{a}} \in SL(2,\mathbb{R})$ be  a scaling matrix such that $\sigma_{\textbf{a}}\cdot\infty=\textbf{a}$ and $\sigma_{\textbf{a}}^{-1}\Gamma_{\textbf{a}}\sigma_{\textbf{a}}=\Gamma_{\infty}$. We define similarly for $\sigma_{\textbf{b}}$. The Kloosterman sum attached to two cusps $\textbf{a}$ and $\textbf{b}$ is defined in \cite{desiwa} as
\[
 S_{\textbf{a}\textbf{b}}(\alpha,\beta;c)=\sum_{\begin{pmatrix} a & * \\ c& d \end{pmatrix} \in \Gamma_{\infty} \backslash \sigma_{\textbf{a}}^{-1}\Gamma_0(P)\sigma_{\textbf{b}} \slash \Gamma_{\infty}} e \left( \frac{\alpha a+\beta d}{c} \right).
\]
Returning back to our analysis, we obtain the standard Kloosterman sum associated simply to the 
cusp at $\infty$ for the group $\Gamma_0(P)$ in \eqref{newmain}. 
Previously, $P$ was attached to $(n-m)$ and not the 
modulus $q$, which gave a Kloosterman sum associated with the cusps
at 0 and $\infty$ for $\Gamma_0(P)$ in \eqref{oldmain}.

Finally, we remark that we can improve the hybrid range further by using a Large Sieve 
inequality as in \cite{desiwa} to estimate the sums of Kloosterman sums instead of 
bounding them individually. Even using the Kuznetsov formula with a Large Sieve inequality 
would improve the subconvexity estimate if not the range of subconvexity. One could also 
introduce an amplification to improve the subconvexity bound. Since 
our purpose is simply to demonstrate the utility of the modified delta method in 
improving the range of subconvexity and simplifying the Kloosterman sum structure, we do not 
continue the argument in these ways.

\section
{Lemmas: summation formulas, the delta method}
Before beginning the proof of the theorem, we collect several lemmas
which will be used in the proof. The crux of the work is an application
of the delta method, which we state below.
The delta method was used in \cite{dfi}. This is a decomposition of the 
$\d$-symbol via a character sum. We use the version given by Heath-Brown 
\cite{heathbrown}.
\begin{lemma}[The delta method, \cite{heathbrown}]\label{dmethod}
For any $Q>1$ there exist $c_Q>0$ and a smooth function $g(x,y)$ defined on 
$(0,\infty) \times \mathbb{R}$, such that
\[
  \delta(n,0)=\frac{c_Q}{Q^2}\sum_{q=1}^{\infty}\underset{a\; \mathrm{mod} \;q}
  {{\sum}^{\star}}e\left(\frac{an}{q}\right)
  g\left(\frac{q}{Q},\frac{n}{Q^2}\right).
\]
The constant $c_Q$ satisfies $c_Q=1 + O_A(Q^{-A})$ for any $A > 0$. Moreover, 
$g(x,y) \ll x^{-1}$ for all $y$, and $g(x,y)$ is non-zero only for 
$x \leq \max\{1,2|y|\}$. If $x\leq 1$ and $2|y|\leq x$, then
\[
	x^i\pa{^i}{x^i}g(x,y) \ll x^{-1},\qquad \pa{}{y}g(x,y) = 0.
\]
If $2|y|>x$, then
\[
	x^iy^j\pa{^i}{x^j}\pa{^j}{y^j}g(x,y) \ll x^{-1}.
\]
\end{lemma}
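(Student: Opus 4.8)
The plan is to prove the identity constructively: exhibit a single weight $g$ built from a fixed bump function and then check the stated properties by hand. Since the inner character sum is a Ramanujan sum, $\sum_{a\bmod q}^{\star}e(an/q)=c_q(n)$, the entire right-hand side equals
\[
  \frac{c_Q}{Q^2}\sum_{q=1}^{\infty}c_q(n)\,g\!\left(\frac{q}{Q},\frac{n}{Q^2}\right),
\]
so the proof splits into two independent tasks: showing this vanishes \emph{identically} for $n\neq 0$, and showing that for $n=0$ it equals $Q^2/c_Q$ up to the normalization, thereby reproducing $\delta(n)$ exactly. The whole design principle is to force the first (vanishing) task to be an exact algebraic identity rather than an estimate, and this is what dictates the shape of $g$ as a difference of two pieces.

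I would fix once and for all a smooth $w\ge 0$ supported in $[1/2,1]$ with $\int_0^{\infty}w(t)\,dt=1$, and set
\[
  g(x,y)=\sum_{j=1}^{\infty}\frac{1}{xj}\left(w(xj)-w\!\left(\frac{|y|}{xj}\right)\right).
\]
For $n\neq 0$ I substitute $x=q/Q$, $y=n/Q^2$, interchange the $q$- and $j$-sums, and collect terms by $m=qj$; since $\tfrac{1}{qj}=\tfrac1m$ depends only on $m$, the inner sum over $q\mid m$ becomes $\sum_{q\mid m}c_q(n)=m\,\mathbf{1}[m\mid n]$, which is immediate from grouping the complete sum $\sum_{a\bmod m}e(an/m)=m\,\mathbf{1}[m\mid n]$ according to the value of $\gcd(a,m)$. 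What survives is $\sum_{m\mid |n|}\big(w(m/Q)-w(|n|/(mQ))\big)$, and the involution $m\leftrightarrow |n|/m$ on the divisors of $|n|$ interchanges the two terms, so the sum is exactly $0$. This exact cancellation is the heart of the argument and the main obstacle: the subtracted term $w(|y|/(xj))$ must be present precisely so that the divisor reflection annihilates everything when $n\neq 0$, while leaving the $n=0$ term untouched.

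For $n=0$ we have $c_q(0)=\varphi(q)$ and the subtracted term drops (its argument is $0\notin\supp w$), so the sum collapses via $\sum_{q\mid m}\varphi(q)=m$ to $Q\sum_{m\ge 1}w(m/Q)$; Poisson summation (equivalently Euler--Maclaurin, using that $w$ is smooth and compactly supported) gives $Q\sum_m w(m/Q)=Q^2\int w+O_A(Q^{-A})=Q^2(1+O_A(Q^{-A}))$. Defining $c_Q$ as the reciprocal of $Q^{-2}\sum_m w(m/Q)$ then yields $c_Q=1+O_A(Q^{-A})$ and makes the $n=0$ identity exact. Finally, the size, support, and derivative bounds on $g$ follow by differentiating the explicit series termwise: since $w$ and all its derivatives are bounded with support in $[1/2,1]$, the factor $w(xj)$ forces $j\asymp 1/x$ and $w(|y|/(xj))$ forces $j\asymp |y|/x$, producing $g\ll x^{-1}$, the support restriction $x\le\max\{1,2|y|\}$, and the claimed bounds for $x^iy^j\pa{^i}{x^i}\pa{^j}{y^j}g$; moreover, in the range $2|y|\le x$ every argument $|y|/(xj)$ with $j\ge 1$ falls below $\supp w$, so the second term drops and $g$ becomes independent of $y$, giving $\pa{}{y}g=0$. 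I expect the termwise derivative bookkeeping to be the only genuinely technical part, while all the cleverness is concentrated in the exact vanishing for $n\neq 0$.
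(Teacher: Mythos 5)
The paper does not actually prove this lemma---it quotes it verbatim from Heath-Brown---and your construction, namely the weight $g(x,y)=\sum_{j\ge 1}\frac{1}{xj}\bigl(w(xj)-w(|y|/(xj))\bigr)$, the collapse of $\sum_{q\mid m}c_q(n)$ to $m\,\mathbf{1}[m\mid n]$, the divisor-involution cancellation for $n\neq 0$, and the Poisson-summation normalization of $c_Q$, is precisely Heath-Brown's original argument, so your proposal is correct and follows the same route as the cited source. The one slip is notational: since the $n=0$ side equals $\frac{c_Q}{Q^2}\cdot Q\sum_m w(m/Q)$, you should define $c_Q$ as the reciprocal of $Q^{-1}\sum_m w(m/Q)$ (equivalently, of $Q^{-2}$ times the quantity $Q\sum_m w(m/Q)$ you computed), not of $Q^{-2}\sum_m w(m/Q)$ as literally written, which is off by a factor of $Q$.
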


Let $k$ be a positive integer. We recall elementary properties of 
$J$-Bessel functions as can be seen in \cite{whwa96}. 
The $J$-Bessel function is defined by
\[
  J_k(x)=e^{ix}W_k(x)+e^{-ix}\overline{W}_k(x)
\]
where
\[
  W_k(x)=\frac{e^{i(\frac{\pi}{2}k-\frac{\pi}{4})}}{\Gamma(k+\frac{1}{2})}
  \sqrt{\frac{2}{\pi x}}\int_{0}^{\infty}e^{-y}\left( y\left( 
  1+\frac{iy}{2x} \right) \right)^{k-\frac{1}{2}} dy.
\]
Applying
the asymptotic expansion for Whittaker functions $W$ for 
$x \gg 1$, we have
\[
  x^jW_k^{(j)}(x) \ll_{k,j} \frac{1}{(1+x)^{\frac{1}{2}}}
\]
with $j \geq 0$. Using the Taylor series expansion for $x \ll 1$,
we obtain the bound
\[
  x^jJ_k^{(j)}(x) \ll_{k,j} x^k 
\]
with $j \geq 0$. The $J$-Bessel function is used in the integral transform found in
Voronoi summation.
\begin{lemma}[Voronoi summation, \cite{kmvdk}]\label{voronoi}
Let $(a,q)=1$ and let $h$ be a smooth function compactly supported in
$(0,\infty)$. Let $f$ be a holomorphic newform of level $P$ and weight $k$. Set
$P_2:=P/(P,q)$. Then there exists a complex number $\eta$ of modulus 1 
(depending on $a,q,f$) and a newform $f^*$ of the same level $P$ and same 
weight $k$ such that
\[
	\sum_n\l_f(n)e\pair{n\frac{a}{q}}h(n) = \frac{2\pi\eta}{q\sqrt{P_2}}\sum_n
    \l_{f^*}(n)e\pair{-n\frac{\overline{aP_2}}{q}}\int_0^\infty h(y)J_{k-1}
    \pair{\frac{4\pi\sqrt{ny}}{q\sqrt{P_2}}}dy.
\]
\end{lemma}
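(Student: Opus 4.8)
The plan is to derive the formula from the functional equation of the additively twisted Dirichlet series
\[
  D_{a/q}(s) = \sum_{n\geq 1} \frac{\lambda_f(n)\,e(na/q)}{n^s},
\]
and then recover the stated identity by Mellin inversion against $h$. First I would write, for $\sigma$ large,
\[
  \sum_n \lambda_f(n)\,e(na/q)\,h(n) = \frac{1}{2\pi i}\int_{(\sigma)} \widetilde h(s)\,D_{a/q}(s)\,ds,
\]
where $\widetilde h(s)=\int_0^\infty h(x)x^{s-1}\,dx$; since $h$ is smooth and compactly supported in $(0,\infty)$, the Mellin transform $\widetilde h$ is entire with rapid vertical decay, which justifies all of the contour shifts below.

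The analytic core is a functional equation for $D_{a/q}(s)$, obtained from the automorphy of $f$. Writing out the Fourier expansion, the completed series satisfies $(2\pi)^{-s}\Gamma(s)\,D_{a/q}\!\pair{s-\tfrac{k-1}{2}} = \int_0^\infty f(a/q+iy)\,y^{s-1}\,dy$, so to continue $D_{a/q}$ I would transform $f(a/q+iy)$ by a matrix carrying the cusp $a/q$ to $\infty$. For $\gamma=\pair{\begin{smallmatrix} a & b \\ q & d\end{smallmatrix}}\in\SL(2,\Z)$ with $ad-bq=1$ one computes $\gamma^{-1}(a/q+iy) = -d/q + i/(q^2y)$ with automorphy factor $(i/(qy))^k$; the reciprocity $d\equiv \bar a \pmod q$ turns the additive character $e(na/q)$ into its dual.

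The genuine difficulty is the level. Since $f$ is automorphic only for $\Gamma_0(P)$, when $P\nmid q$ the matrix $\gamma$ does not lie in $\Gamma_0(P)$; geometrically the cusp $a/q$ is then $\Gamma_0(P)$-equivalent to $0$ rather than to $\infty$. I would accordingly insert the Fricke involution $W_P=\pair{\begin{smallmatrix} 0 & -1 \\ P & 0\end{smallmatrix}}$, using $f|_k W_P=\eta_P f^*$ with $|\eta_P|=1$ and $f^*$ the dual newform, so that the required change of cusp factors through $\Gamma_0(P)$ together with $W_P$. This is exactly the step that manufactures the normalization $1/(q\sqrt{P_2})$ with $P_2=P/(P,q)$, the unimodular constant $\eta$, the dual form $f^*$, and the level-modified dual character $e(-n\,\overline{aP_2}/q)$; when $P\mid q$ the cusp is equivalent to $\infty$, $\gamma$ already lies in $\Gamma_0(P)$, no Fricke twist is needed, and one recovers $P_2=1$. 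I expect this level bookkeeping — tracking how the Atkin-Lehner pseudo-eigenvalue, the Gauss-sum phase, and the scaling $\sqrt{P_2}$ assemble uniformly across the two cases $P\mid q$ and $P\nmid q$ — to be the main obstacle.

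With the functional equation established I would substitute it into the contour integral and move the line of integration. The emerging ratio of Gamma factors is precisely the Mellin transform of the Bessel kernel $J_{k-1}$ (a Hankel transform, i.e.\ a Weber-Schafheitlin identity), so reassembling the dual $n$-sum and inverting the Mellin transform converts $\frac{1}{2\pi i}\int \widetilde h(s)(\cdots)^{-s}\,ds$ into $\int_0^\infty h(y)\,J_{k-1}\!\pair{\frac{4\pi\sqrt{ny}}{q\sqrt{P_2}}}\,dy$, which is the claimed formula. As a shortcut one could instead cite the representation-theoretic Voronoi formula directly, but the contour computation above makes the provenance of each factor ($\eta$, $f^*$, $P_2$, and the Bessel kernel) transparent.
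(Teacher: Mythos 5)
Your proposal is correct and follows essentially the same route as the proof in the cited source: the paper itself gives no proof of this lemma (it is imported verbatim from Kowalski--Michel--Vanderkam), and that reference's argument is exactly your outline --- Mellin inversion of the additively twisted series, a functional equation obtained from automorphy combined with the Atkin--Lehner/Fricke operator to handle the cusp $a/q$ (equivalent to $0$ when $(q,P)=1$, to $\infty$ when $P\mid q$), and reassembly of the Gamma-factor ratio into the $J_{k-1}$ kernel. You have also correctly identified the genuine crux, namely the level bookkeeping that produces $\eta$, $f^*$, $P_2=P/(P,q)$, and the twist $e(-n\overline{aP_2}/q)$ uniformly across the two cases.
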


In general, given $e(\bar a/q)$ the overline on the 
numerator $a$ indicates the multiplicative inverse modulo the denominator 
$q$. In order to truncate and bound the sums which result after Voronoi summation,
we use the following lemma.

\begin{lemma}\label{jbound}
	Let $k,M,P$ be positive integers with $k\geq2$ and let $r$ be a nonzero
    integer. Take $Q>1$ and $X,Y\geq1$. For any $a,b>0$, define
    \[
      J(a,b\;;c):= \int_0^\infty\int_0^\infty 
        \frac{1}{\sqrt{xy}}F\pair{\frac{x}{X},\frac{y}{Y}}
        g\pair{\frac{qc}{Q},\frac{x-y+rM}{PQ^2}}J_{k-1}(4\pi a\sqrt{x})
        J_{k-1}(4\pi b\sqrt{y})dxdy,
    \]
    where $g\pair{\frac{qc}{Q},\frac{x-y+rM}{PQ^2}}$ is the function in 
    lemma \ref{dmethod} and $F$ is a smooth function supported in
    $[1/2,5/2]\times[1/2,5/2]$ with partial derivatives satisfying
    \[
      x^iy^j\pa{^i}{x^i}\pa{^j}{y^j}F(x,y) \ll ZZ_x^iZ_y^j
    \]
    for some $Z>0$, $Z_x,Z_y\gg1$. We have
    \begin{align}
    	J(a,b\;;c) \ll Z\sqrt{XY}\frac{Q}{qc}
        \frac{1}{(1+a\sqrt{X})^{1/2}}
        \frac{1}{(1+b\sqrt{Y})^{1/2}} 
        \left[\frac{1}{a\sqrt{X}}\pair{{Z_x}+\frac{X}{qcQP}}\right]^i
        \left[\frac{1}{b\sqrt{Y}}\pair{{Z_y}
        +\frac{Y}{qcQP}}\right]^j.
        \label{jbound1}
    \end{align}
    Also,
    \begin{equation}
      J(a,b\;;c) \ll \frac{Z}{ab(1+a\sqrt{X})^{1/2}(1+b\sqrt{Y})^{1/2}} 
      \frac{Q}{qc}\min\left\{{Z_x b\sqrt{Y},{Z_y}a\sqrt{X}}\right\}
      (XY|r|MQqP)^{\e}.
      \label{jbound2}
    \end{equation}
\end{lemma}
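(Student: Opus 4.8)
The plan is to linearize the oscillation of the two Bessel factors and then integrate by parts, treating the two estimates as, respectively, a repeated integration by parts and a single one. Substituting $u=\sqrt x$, $v=\sqrt y$ turns the weight $\tfrac1{\sqrt{xy}}\,dx\,dy$ into $4\,du\,dv$, so that
\[
  J(a,b\,;c)=4\int_0^\infty\!\!\int_0^\infty \Phi(u,v)\,J_{k-1}(4\pi au)\,J_{k-1}(4\pi bv)\,du\,dv,\qquad \Phi(u,v):=F\!\pair{\tfrac{u^2}{X},\tfrac{v^2}{Y}}g\!\pair{\tfrac{qc}{Q},\tfrac{u^2-v^2+rM}{PQ^2}},
\]
with $\Phi$ supported on $u\sim\sqrt X$, $v\sim\sqrt Y$. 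Using $J_{k-1}(z)=e^{iz}W_{k-1}(z)+e^{-iz}\o{W}_{k-1}(z)$ each Bessel factor becomes $e(\pm2au)$ (resp.\ $e(\pm2bv)$) times an amplitude $W_{k-1}(4\pi au)$ of size $\ll(1+a\sqrt X)^{-1/2}$ whose $u$-derivatives each cost $\ll1/\sqrt X$. The key structural gain is that, after the substitution, the phase is \emph{linear}, so $\tfrac{d}{du}e(\pm2au)=\pm4\pi i a\,e(\pm2au)$ and one integration by parts in $u$ gains a clean factor $1/a$ (likewise $1/b$ in $v$), with no stationary point since $a,b>0$.

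For \eqref{jbound1} I would integrate by parts $i$ times in $u$ and $j$ times in $v$. The $u$-frequencies of the amplitude are $\ll Z_x/\sqrt X$ from $F$ and $\ll1/\sqrt X$ from $W_{k-1}$; the only delicate factor is $g$, whose $u$-dependence enters through $\z=(u^2-v^2+rM)/(PQ^2)$. By Lemma \ref{dmethod}, $g\ll Q/(qc)$ everywhere, while $\p_u g\ne0$ only where $2|\z|>qc/Q$, i.e.\ $|u^2-v^2+rM|>qcQP/2$; there $\z^m\p_\z^m g\ll Q/(qc)$, so the chain rule gives $\p_u^m g\ll \tfrac{Q}{qc}\pair{\tfrac{\sqrt X}{qcQP}}^m$. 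Hence each $u$-derivative of $\Phi\,W_{k-1}(4\pi au)$ costs at most $\tfrac1{\sqrt X}\pair{Z_x+\tfrac{X}{qcQP}}$ and each $v$-derivative at most $\tfrac1{\sqrt Y}\pair{Z_y+\tfrac{Y}{qcQP}}$. Bounding the amplitude after $i+j$ derivatives by its size times these per-step costs, and integrating trivially over the support of area $\sqrt{XY}$, yields \eqref{jbound1}: the prefactor $Z\sqrt{XY}\tfrac{Q}{qc}(1+a\sqrt X)^{-1/2}(1+b\sqrt Y)^{-1/2}$ collects the sizes of $F$, $g$, and the two Whittaker factors against the measure, and the two bracketed factors are the $i$-th and $j$-th powers of the per-step gains $\tfrac1{a\sqrt X}\pair{Z_x+\tfrac{X}{qcQP}}$ and $\tfrac1{b\sqrt Y}\pair{Z_y+\tfrac{Y}{qcQP}}$.

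For \eqref{jbound2} I would integrate by parts only once, in whichever variable realizes the minimum; say in $u$. After one step the integrand is $\tfrac1{4\pi i a}\,\p_u\!\pair{\Phi\,W_{k-1}}e(\pm2au)$, and the Leibniz rule produces three terms. The terms differentiating $F$ or $W_{k-1}$ are handled as above and contribute $\ll\tfrac1a ZZ_x\tfrac{Q}{qc}(1+a\sqrt X)^{-1/2}$ after the $u$-integration. The crucial term is the one differentiating $g$, where $\p_u g\ll\tfrac{Q}{qc}\cdot\tfrac{\sqrt X}{|u^2-v^2+rM|}$. Here, instead of replacing $|u^2-v^2+rM|$ by its worst-case lower bound $qcQP/2$ as in \eqref{jbound1}, I would integrate exactly in $u$: for fixed $v$ the singularity of $\tfrac1{|u^2-v^2+rM|}$ at $u=\sqrt{v^2-rM}$ is excluded to distance $\gg qcQP/\sqrt X$ by the condition $2|\z|>qc/Q$, so
\[
  \int_{u\sim\sqrt X}\frac{du}{|u^2-v^2+rM|}\ll\frac1{\sqrt X}\,\log\!\pair{\frac{X}{qcQP}}\ll\frac1{\sqrt X}\,(XY|r|MQqP)^{\e}.
\]
Thus the $g$-term is dominated, up to $(XY|r|MQqP)^\e$, by the $F$-term, the whole $u$-integral is $\ll\tfrac1a ZZ_x\tfrac{Q}{qc}(1+a\sqrt X)^{-1/2}(XY|r|MQqP)^\e$ uniformly in $v$, and integrating in $v$ against $J_{k-1}(4\pi bv)$, which contributes $\int_{v\sim\sqrt Y}|J_{k-1}(4\pi bv)|\,dv\ll\sqrt Y(1+b\sqrt Y)^{-1/2}$, produces the branch $Z_xb\sqrt Y$ of \eqref{jbound2}. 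Reversing the roles of $u$ and $v$ produces the branch $Z_ya\sqrt X$, and taking the smaller gives the stated minimum.

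I expect the main obstacle to be exactly this coupling of the two variables through $g$: the integrals do not separate, and $\p_u g$ is genuinely of size $\sim X/(qcQP)$ on part of the domain, so the naive bound \eqref{jbound1} loses a power there. The device that saves \eqref{jbound2} is to combine the support restriction $2|\z|>qc/Q$ coming from Lemma \ref{dmethod} with the exact integration of the inverse-linear singularity, turning the apparent loss into a harmless logarithm. Verifying that this logarithm is always of size $(XY|r|MQqP)^\e$ — uniformly over all configurations of $q,c,P,Q,X,Y,r$, and in particular whether the singularity $u=\sqrt{v^2-rM}$ falls inside or outside the support of $F$ — is the step that will require the most care.
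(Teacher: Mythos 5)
Your proposal is correct and takes essentially the same route as the paper's proof: integration by parts against the Bessel oscillation, with the term where the derivative hits $g$ bounded pointwise via the support condition $2|\zeta|>qc/Q$ to get \eqref{jbound1}, and instead integrated exactly into a harmless logarithm to get \eqref{jbound2}, finally taking the minimum over which variable is integrated by parts. The only cosmetic difference is your substitution $u=\sqrt{x}$, $v=\sqrt{y}$ to linearize the phase, where the paper simply rescales $x\mapsto xX$, $y\mapsto yY$ and works with the phase of $J_{k-1}(4\pi a\sqrt{xX})$ directly.
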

\begin{proof}
Starting with the change of variables $x\mapsto xX$ and $y\mapsto yY$
and then integrating by parts and using the $J$-Bessel function bound
gives \eqref{jbound1}. For example, integrating by parts once in the 
$x$ integral leads to
\[
  J(a,b\;;c) \ll Z\sqrt{XY}\frac{Q}{qc}\frac{1}{(1+a\sqrt{X})^{1/2}}
  \frac{1}{(1+b\sqrt{Y})^{1/2}}\left[\frac{1}{a\sqrt{X}}
  (Z_x+XI)\right],
\]
where
\[
  I := \underset{2|xX-yY+rM|>qQP}{\int_{1/2}^{5/2}\int_{1/2}^{5/2}} 
  \frac{1}{|xX-yY+rM|}\;dxdy. 
\]
The condition on the integral comes from where the $g$ function is nonzero.
However, we have $|xX-yY+rm|^{-1}\ll(qcQP)^{-1}$, which gives \eqref{jbound1}
for $i=1$ and $j=0$. Repeated integration by parts gives the same result for
higher values of $i$ and $j$.

For \eqref{jbound2}, we treat the integral $I$ differently. Let
$u=xX-yYrM$ to get
\[
  I \ll \frac{1}{X} \int^{5/2}_{1/2} \int^{(X+Y+|r|M)}_{qQP/2} 
  \frac{1}{u}dudy \ll \frac{(XY|r|MqQP)^{\e}}{X}.
\]
Doing the same thing with $i=0$ and $j=1$ and taking the minimum of the two bounds
finishes the proof.

\end{proof}

\section{Reduction of the second moment to shifted sums}
Let $\e,\; 
\delta>0$ and choose any $X$ such that $\mathcal{Q}^{1/2-\delta}\leq X \leq 
\mathcal{Q}^{1/2+\e}$. Define
\[
  S_{f}(X):=\frac{1}{\varphi^\star(M)}\underset{\chi\mod{M}}{{\sum}^{\star}} 
    |L_{f \otimes \chi}(X)|^2,
\] 
where $L_{f \otimes \chi}(X)$ is given in \eqref{twisted-Lfunction}.
The proof of the reduction to shifted sums is similar to \cite{dfi}. We open 
the square and write the 
primitive characters in terms of Gauss sums to obtain
\[
  S_f(X) = \frac{1}{M\vp^\star(M)}\underset{\chi\mod{M}}{{\sum}^{\star}}
  \left| \sum_{b\mod M}\bar\chi(b)\sum_n\frac{\l_f(n)}{\sqrt{n}}
  e\pair{\frac{nb}{M}}
  \right|^2.
\]
Adding the nonprimitive characters to this sum produces the summation over all characters 
modulo $M$. By the orthogonality of multiplicative characters,
\[
  S_f(X) \leq \frac{\vp(M)}{M\vp^\star(M)}\underset{b\mod{M}}{{\sum}^{\star}}
  \left|
  \sum_n \frac{\l_f(n)}{\sqrt{n}} e\pair{\frac{nb}{M}}
  \right|^2.
\]
We extend the summation to all residue classes $M$
by adding the residues which are
not coprime with $M$, and open the square. Using the orthogonality of additive characters,
\[
  S_f(X) \ll M^{\e}
    \sum_n\frac{\l_f(n)}{\sqrt{n}}h\pair{\frac{n}{X}}
    \sum_{m\equiv n\mod{M}}\frac{\l_f(m)}{\sqrt{m}}h\pair{\frac{m}{X}}.
\]
Write $m=n+rM$ and note that $r\ll \frac{X}{M} \leq \frac{\mc Q^{\frac{1}{2}+\e}}{M}$. The diagonal term $m=n$ satisfies
\[
  \sum_{n}\frac{\lambda_{f}(n)^2}{n} h\left(\frac{n}{X}\right)^2 \ll 
  \mathcal{Q}^{\e}.
\]
We are left to consider the off-diagonal terms
\[
  R_f(X):= \sum_{0 \neq |r| \ll \frac{\mathcal{Q}^{{1}\slash{2}+\e}}{M}}
    \sum_{n} \frac{\lambda_{f}(n)}{\sqrt{n}} 
    h \left(\frac{n}{X}\right)\sum_{m=n+rM}
    \frac{\lambda_{f}(m)}{\sqrt{m}} h \left(\frac{m}{X}\right).
\]

\section{Treatment of shifted convolution sums}
Let $X,Y \geq 1$. Motivated by the reduction of the second moment problem to 
bounding the shifted convolution sums, we now take $f_1,f_2$ to newforms in $B^*_k(P)$ and consider 
\[
	S_{f_1,f_2}(X,Y) := \sum_{n}
    \sum_{m=n+rM}\frac{\l_{f_1}(n)\l_{f_2}(m)}
    {\sqrt{nm}}F\pair{\frac{n}{X},\frac{m}{Y}},
\]
where $r$ is a nonzero integer coprime to $P$ (valid in our specific 
application, where $|r|\ll P^{1/2+\e}$). In this section we establish 
theorem \ref{shifted-sums-thm}.

\subsection{Modified delta method}
We start with detecting the equation $m=n+rM$ in $S_{f_1,f_2}(X,Y)$. To do this,
we note that $n-m+rM=0$ is equivalent to $(n-m+rM)/P=0$
and $n-m+rM \equiv 0$ modulo $P$. Therefore,
\[
  S_{f_1,f_2}(X,Y)=\underset{n-m+rM \equiv 0 \mod {P}}{\sum \sum} 
  \frac{\l_{f_1}(n)\;\l_{f_2}(m)}{\sqrt{nm}} 
    F\pair{\frac{n}{X},\frac{m}
    {Y}} \delta\pair{\frac{n-m+rM}{P},0}.
\]
Using lemma \ref{dmethod} to detect $(n-m+rM)/P=0$ and a sum of additive
characters to detect the congruence gives
\[
\begin{split}
  S_{f_1,f_2}(X,Y)=& \sum_{n} \frac{\l_{f_1}(n)}
    {\sqrt{n}} \sum_{m} \frac{\l_{f_2}(m)}{\sqrt{m}}
    F\pair{\frac{n}{X},\frac{m}{Y}}\frac{1}{P}\sum_{b\mod{P}}
    e\left(\frac{b(n-m+rM)}{P}\right) \\
  & \times 
    \frac{c_Q}{Q^2}\sum_{q=1}^{\infty}\underset{a\mod{q}}
    {{\sum}^{\star}}e\left(\frac{a(n-m+rM)}{Pq}\right)g\left(\frac{q}{Q},
    \frac{n-m+rM}{PQ^2}\right) \\
  =& \frac{c_Q}{PQ^2} \sum_{q=1}^{\infty}\underset{a\mod{q}}{{\sum}^{\star}} 
    \sum_{b\mod{P}} e\pair{\frac{rM(a+bq)}{qP}}
    \sum_{n}\frac{\lambda_{f_1}(n)}{\sqrt{n}}e\pair{\frac{n(a+bq)}{qP}} \\
  & \times  \sum_{m} \frac{\lambda_{f_2}(m)}{\sqrt{m}}
    e\pair{-\frac{m(a+bq)}{qP}} F\pair{\frac{n}{X},\frac{m}{Y}} g\left(\frac{q}
    {Q},\frac{n-m+rM}{PQ^2}\right).
\end{split}
\]
From lemma \ref{dmethod}, $g\pair{\frac{q}{Q},\frac{n-m+rM}{PQ^2}} \neq 0$ for 
$\frac{q}{Q} \leq \max \left\{1,\frac{2|n-m+rM|}{PQ^2}\right\}$. We want to
choose $Q$ such that the max is always 1. To do this, note that
\[
  2\frac{|n-m+rM|}{PQ^2} \leq 2\frac{X+Y+|r|M}{PQ^2}.
\]
For there to be any solutions to $m-n+rM=0$, we need 
$|r|\leq (X+Y)/M$. Therefore it is sufficient to choose $Q$
such that
\[
  8\frac{\max\set{X,Y}}{PQ^2} \leq 1.
\]
Set $Q^2:=8\frac{\max\set{X,Y}}{P}$ so that the outer $q$-sum only extends to $Q$. We 
write $\gamma=a+bq$. Since the $a$- and $b$-sums yield a complete set of residue 
$\gamma$ modulo $qP$ with $(\gamma,q)=1$, $S_{f_1,f_2}(X,Y)$ reduces to
\[
\begin{split}
  \frac{c_Q}{PQ^2} \sum_{q=1}^Q\;
  	\sum_{\substack{\gamma \mod{qP}\\(\gamma,q)=1}}
    e\left(\frac{rM\gamma}{qP}\right)
    \sum_{n}
      \frac{\l_{f_1}(n)}{\sqrt{n}}e\left(\frac{n\gamma}{qP}\right) 
      \sum_{m} \frac{\l_{f_2}(m)}{\sqrt{m}}
    e\left(-\frac{m\gamma}{qP}\right)
    F\left(\frac{n}{X},\frac{m}{Y}\right)
    g\left(\frac{q}{Q},\frac{n-m+rM}{PQ^2}\right).
\end{split}    
\]

\subsection{Voronoi summation}

We apply Voronoi summation to $S_{f_1,f_2}(X,Y)$ in the $m$-sum, then in the 
$n$-sum. Since we are dealing with forms of prime level $P$, we break the 
$q$-sum above as $S_{f_1,f_2}(X,Y)=\mathcal{S}+\mathcal{T}$, 
where $\mathcal{S}$ is the sum over $(q,P)=1$ and $\mathcal{T}$ is the sum over 
$P\; |\; q$.

\subsubsection{$(q,P)=1$ case}
Assume that $(q,P)=1$. We split the $\gamma$-sum in $\mathcal{S}$
as $\mathcal{S}=\mathcal{S}_1+\mathcal{S}_2$, where $\mathcal{S}_1$
is the sum over $\gamma$ for which $(P,\gamma)=1$ and $\mathcal{S}_2$ is the sum over $\gamma$ for which 
$P\;|\;\gamma$. When $(P,\gamma)=1$, we get the sum
\[
\begin{split}
  \mathcal{S}_1:=&\frac{c_Q}{PQ^2} \underset{(q,P)=1}{\sum_{q=1}^Q}
    \underset{\gamma\mod{qP}}{{\sum}^{\star}}
    e\left(\frac{rM\gamma}{qP}\right)
    \sum_{n}
      \frac{\l_{f_1}(n)}{\sqrt{n}}e\left(\frac{n\gamma}{qP}\right) \\
  &\qquad\times \sum_{m} \frac{\l_{f_2}(m)}{\sqrt{m}}
    e\left(-\frac{m\gamma}{qP}\right)
    F\left(\frac{n}{X},\frac{m}{Y}\right)
    g\left(\frac{q}{Q},\frac{n-m+rM}{PQ^2}\right).
\end{split}
\]
For each $q$, applying lemma \ref{voronoi} first in the $n$-sum 
and then in the $m$-sum results in
\[
    \frac{1}{P^2q^2}
    \sum_{n}\l_{f_1^*}(n)e\pair{\frac{-n\o{\gamma}}{
    Pq}}
    \sum_{m}\l_{f_2^*}(m)e\pair{\frac{m\o{\gamma}}{Pq}}
    J \pair{\frac{\sqrt{n}}{Pq},\frac{\sqrt{m}}{
    Pq}\;;1},
\]
where $J$ is again the function given in lemma \ref{jbound}. 
This produces a Kloosterman sum modulo $Pq$:
\[
   \frac{1}{Q^2P^{3}}\underset{(q,P)=1}
  {\sum_{q=1}^Q} \frac{1}{q^2} \sum_{n} \lambda_{f_1^*}(n)
  \sum_{m}
  \lambda_{f_2^*}(m) \;J \pair{\frac{\sqrt{n}}{Pq},\frac{\sqrt{m}}{Pq}\;;1}
  S(rM,m-n;Pq).
\]
Using the first bound of the $J$-function in lemma \ref{jbound} allows 
us to truncate the $n$- and $m$-sums to the ranges
\[
   n \leq T_1:=\frac{P^2q^2}{X}\left(Z_x+\frac{X}{qQP}\right)^2,
   \quad m \leq T_2:=\frac{P^2q^2}{Y}\left(Z_y+\frac{Y}{qQP} 
   \right)^2.
\]
Applying the Weil bound and the second bound in lemma 
\ref{jbound}, we see that
\[
\begin{split}
\mc S_1 \ll \frac{{\mathcal{Q}}^{\e}}{Q^2P^{3}}&\underset{(q,P)=1}
     {\sum_{q=1}^Q} \frac{1}{q^2}\sum_{n \leq T_1}\sum_{m \leq T_2}
     \frac{Z}{\frac{\sqrt{n}}{Pq}\frac{\sqrt{m}}{Pq}\left(1+\frac{\sqrt{nX}}{Pq}
     \right)^{\frac{1}{2}}
     \left(1+\frac{\sqrt{mY}}{Pq}\right)^{\frac{1}{2}}}\\
  & \times \frac{Q}{q}\min\left\{Z_x\frac{\sqrt{mY}}{Pq},Z_y
     \frac{\sqrt{nX}}{Pq}\right\} (rM,m-n,Pq)^{\frac{1}{2}} 
     (Pq)^{\frac{1}{2}}. 
\end{split}
\]
We bound the minimum by taking the geometric mean and simplify to
\[
\begin{split}
  \mc S_1 &\ll \mathcal{Q}^{\e}Z(Z_xZ_y)^{\frac{1}{2}} 
       \frac{1}{QP^{\frac{1}{2}}} \underset{(q,P)=1}{\sum_{q=1}^Q}  
        \frac{1}{q^{\frac{1}{2}}} \sum_{n \leq T_1} \sum_{m \leq T_2} 
        \frac{1}{(nm)^{\frac{1}{2}}}(rM,m-n,Pq)^{\frac{1}{2}} 
       \\
        & \ll\mathcal{Q}^{\e}Z(Z_xZ_y)^{\frac{1}{2}} \frac{1}
        {QP^{\frac{1}{2}}} \underset{(q,P)=1}{\sum_{q=1}^Q}  
        \frac{1}{q^{\frac{1}{2}}}(T_1T_2)^{\frac{1}{2}} 
        (rM,m-n,Pq)^{\frac{1}{2}}  
        \\
        &\ll \mathcal{Q}^{\e}Z(Z_xZ_y)^{\frac{1}{2}}
        \frac{1}{(XY)^{\frac{1}{2}}}\frac{P^{\frac{3}{2}}}{Q} 
        \underset{(q,P)=1}{\sum_{q=1}^Q} q^{\frac{3}{2}} 
        \left(Z_x+\frac{X}{qQP}\right)\left(Z_y+\frac{Y}{qQP}\right)
        (rM,m-n,Pq)^{\frac{1}{2}}.
\end{split}
\]
Notice that $(rM,m-n,Pq) \leq (rM,Pq) \leq (rM,q)$, since $r$ and $P$ are coprime, 
and $P$ does not divide $M$. Rewriting $q$ as $qd$ with $d=(rM,q)$ gives
\[
   \begin{split}
   \mc S_1 & \ll \mathcal{Q}^{\e}Z(Z_xZ_y)^{\frac{1}{2}}
        \frac{1}{(XY)^{\frac{1}{2}}}\frac{P^{\frac{3}{2}}}{Q} 
        \sum_{d | rM} d^{\frac{1}{2}}
        \underset{q \leq Q/d}{\sum} 
        (qd)^{\frac{3}{2}} \left(Z_x+\frac{X}{qdQP}\right)
        \left(Z_y+\frac{Y}{qdQP}\right)  \\
        & \ll \mathcal{Q}^{\e}Z(Z_xZ_y)^{\frac{1}{2}}
        \frac{1}{(XY)^{\frac{1}{2}}} P^{\frac{3}{2}}Q^{\frac{3}{2}}
        \sum_{d | rM}\frac{1}{d^{\frac{1}{2}}}
        \left(Z_x+\frac{X}{Q^2P} \right) \left(Z_y+\frac{Y}{Q^2P} \right)\\
   & \ll \mathcal{Q}^{\e}r^{\e}Z(Z_xZ_y)^{\frac{1}{2}}
        \frac{1}{(XY)^{\frac{1}{2}}} P^{\frac{3}{2}}Q^{\frac{3}{2}}
        \left(Z_x+\frac{X}{Q^2P} \right) \left(Z_y+\frac{Y}{Q^2P} 
        \right)\\
       & \ll  \mathcal{Q}^{\e}r^{\e}Z(Z_xZ_y)^{\frac{1}{2}}
        \frac{1}{(XY)^{\frac{1}{2}}} P^{\frac{3}{2}}Q^{\frac{3}{2}}
        \left(\max \left\{Z_x,Z_y \right\}+ \frac{\max 
        \left\{X,Y\right\}}{Q^2P}\right)^{2}.
   \end{split}
\]
With $Q^2=8\frac{\max \{X, Y\}}{P}$, we obtain
\[
  \mc S_1 \ll \mc{Q}^\e r^\e Z(Z_xZ_y)^{\frac{1}{2}}\max\set{Z_x,Z_y}^2
    P^{\frac{3}{4}}\frac{\max\set{X,Y}^{\frac{3}{4}}}{
    (XY)^{\frac{1}{2}}}.
\]
This matches the bound in the statement of theorem \ref{shifted-sums-thm}.

Next we deal with the sum where $P|\gamma$.
\[
\begin{split}
  \mathcal{S}_2:=&\frac{c_Q}{PQ^2} \underset{(q,P)=1}{\sum_{q=1}^Q}\;
  	\sum_{\substack{\gamma \mod{qP}\\(\gamma,q)=1\\ P|\gamma}}
    e\left(\frac{rM\gamma}{qP}\right)
    \sum_{n}
      \frac{\l_{f_1}(n)}{\sqrt{n}}e\left(\frac{n\gamma}{qP}\right) \\
  &\qquad\times \sum_{m} \frac{\l_{f_2}(m)}{\sqrt{m}}
    e\left(-\frac{m\gamma}{qP}\right)
    F\left(\frac{n}{X},\frac{m}{Y}\right)
    g\left(\frac{q}{Q},\frac{n-m+rM}{PQ^2}\right)\\
    =&\frac{c_Q}{PQ^2} \underset{(q,P)=1}{\sum_{q=1}^Q}
  	\underset{\gamma\mod{q}}{{\sum}^{\star}}
    e\left(\frac{rM\gamma}{q}\right)
    \sum_{n}
      \frac{\l_{f_1}(n)}{\sqrt{n}}e\left(\frac{n\gamma}{q}\right) \\
  &\qquad\times \sum_{m} \frac{\l_{f_2}(m)}{\sqrt{m}}
    e\left(-\frac{m\gamma}{q}\right)
    F\left(\frac{n}{X},\frac{m}{Y}\right)
    g\left(\frac{q}{Q},\frac{n-m+rM}{PQ^2}\right).
\end{split}
\]
Applying Voronoi summation in the $m$- and $n$-sums gives
\[
  \frac{1}{P^2Q^2}\sum_{q=1}^Q\frac{1}{q^2}
  \sum_{n}\l_{f_1^*}(n)\sum_m\l_{f_2^*}(m)J\pair{\frac{\sqrt{n}}{q\sqrt{P}},
  \frac{\sqrt{m}}{q\sqrt{P}};1}
  S(rM,(m-n)\bar{P};q).
\]
The $n$- and 
$m$-sums can be truncated to
\[
   n \leq T_1:=\frac{Pq^2}{X}\left(Z_x+\frac{X}{qQP}\right)^2,
   \quad m \leq T_2:=\frac{Pq^2}{Y}\left(Z_y+\frac{Y}{qQP}\right)^2.
\]
Repeating the estimation process used for $\mc S_1$ with these parameters, 
one is led to
\[
  \mc S_2 \ll \mc{Q}^\e r^\e Z(Z_xZ_y)^{\frac{1}{2}}\max\set{Z_x,Z_y}^2
    \frac{1}{P^{\frac{3}{4}}}\frac{\max\set{X,Y}^{\frac{3}{4}}}{(XY)^{\frac{1}{2}}}.
\]
This is better than the bound of theorem \ref{shifted-sums-thm}.

\subsubsection{$P\; |\; q$ case}
Finally, observing that $\gamma$ is coprime with $Pq$, 
the remaining sum over $q$, when $P\;|\;q$ is
\begin{align*}
	\mathcal{T} &:= \frac{c_Q}{PQ^2}\underset{P|q}{\sum_{q=1}^Q}
    \underset{\gamma\mod{Pq}}{{\sum}^\star}
    e\pair{\frac{rM\gamma}{Pq}}\sum_{n}\frac{\l_{f_1}(n)}{\sqrt{n}}
    e\pair{\frac{n\gamma}{Pq}} \\
    &\qquad \times \sum_{m}
    \frac{\l_{f_2}(m)}{\sqrt{m}}e\pair{-\frac{m\gamma}{
    Pq}}F\pair{\frac{n}{X},\frac{m}{Y}}g\pair{\frac{q}{Q},
    \frac{n-m+rM}{PQ^2}}.
\end{align*}
After we rewrite $q$ as $qP$, Voronoi summation in the $m$-sum then the $n$-sum gives
\[
  \frac{1}{Q^2P^5}
  \sum_{q\leq Q/P}\frac{1}{q^2} \sum_{n}
  \sum_{m}\l_{f_1^*}(n)\l_{f_2^*}(m) 
  J\pair{\frac{\sqrt{n}}{P^2q},\frac{\sqrt{m}}{P^2q}\;;P} S(rM,m-n;P^2q).
\]
Lemma \ref{jbound} implies that we can truncate the $n$- and $m$-sums to the
ranges
\[
  n \leq T_1:=\frac{P^{4}q^2}{X}\left(Z_x+\frac{X}{qQP^2}\right)^2,\quad 
  m \leq T_2:=\frac{P^{4}q^2}{Y}\left(Z_y+\frac{Y}{qQP^2}\right)^2.
\]
Using the Weil bound 
for Kloosterman sums as in the previous case, we see that
\[
  \mc T \ll \mc Q^\e r^\e Z(Z_xZ_y)^{\frac{1}{2}}\max\set{Z_x,Z_y}^2
    \frac{1}{P^{\frac{1}{4}}}\frac{\max\set{X,Y}^{\frac{3}{4}}}{(XY)^{\frac{1}{2}}}.
\]
This bound on $\mc T$ is better than the bound on $\mc S_1$.
This completes the proof of theorem \ref{shifted-sums-thm}.

\vskip.1in
\noindent
\textbf{Acknowledgments.} This work was completed as an extension of a topics course 
given by Roman Holowinsky at The Ohio State University in Spring 2015. We are very grateful to Roman Holowinsky for a careful reading of the draft. The authors  
would also like to thank Fei Hou for identifying an error in an earlier version of 
this manuscript. 

\nocite{} 
\bibliographystyle{amsplain}
\bibliography{hybrid-subconvexity-gl2-gl1}

\providecommand{\bysame}{\leavevmode\hbox to3em{\hrulefill}\thinspace}
\providecommand{\MR}{\relax\ifhmode\unskip\space\fi MR }
\providecommand{\MRhref}[2]{%
  \href{http://www.ams.org/mathscinet-getitem?mr=#1}{#2}
}
\providecommand{\href}[2]{#2}
\begin{thebibliography}{10}

\bibitem{blha14}
Valentin Blomer and Gergely Harcos, \emph{Addendum: {H}ybrid bounds for twisted
  {$L$}-functions [mr2431250]}, J. Reine Angew. Math. \textbf{694} (2014),
  241--244. \MR{3259045}

\bibitem{brmu13}
T.~D. Browning and R.~Munshi, \emph{Rational points on singular intersections
  of quadrics}, Compos. Math. \textbf{149} (2013), no.~9, 1457--1494.
  \MR{3109729}

\bibitem{co03}
James~W. Cogdell, \emph{On sums of three squares}, J. Th\'eor. Nombres Bordeaux
  \textbf{15} (2003), no.~1, 33--44, Les XXII{\`e}mes Journ{\'e}es
  Arithmetiques (Lille, 2001). \MR{2018999 (2005d:11072)}

\bibitem{del74}
Pierre Deligne, \emph{La conjecture de {W}eil. {I}}, Inst. Hautes \'Etudes Sci.
  Publ. Math. (1974), no.~43, 273--307. \MR{0340258 (49 \#5013)}

\bibitem{desiwa}
J.-M. Deshouillers and H.~Iwaniec, \emph{Kloosterman sums and {F}ourier
  coefficients of cusp forms}, Invent. Math. \textbf{70} (1982/83), no.~2,
  219--288. \MR{684172 (84m:10015)}

\bibitem{dfi}
W.~Duke, J.~Friedlander, and H.~Iwaniec, \emph{Bounds for automorphic
  {$L$}-functions}, Invent. Math. \textbf{112} (1993), no.~1, 1--8. \MR{1207474
  (94c:11043)}

\bibitem{ha03}
Gergely Harcos, \emph{An additive problem in the {F}ourier coefficients of cusp
  forms}, Math. Ann. \textbf{326} (2003), no.~2, 347--365. \MR{1990914
  (2004d:11029)}

\bibitem{hami06}
Gergely Harcos and Philippe Michel, \emph{The subconvexity problem for
  {R}ankin-{S}elberg {$L$}-functions and equidistribution of {H}eegner points.
  {II}}, Invent. Math. \textbf{163} (2006), no.~3, 581--655. \MR{2207235
  (2007j:11063)}

\bibitem{heathbrown}
D.R. Heath-Brown, \emph{A new form of the circle method, and its application to
  quadratic forms}, J. Reine Angew. Math. \textbf{481} (1996), 149--206.

\bibitem{homu13}
Roman Holowinsky and Ritabrata Munshi, \emph{Level aspect subconvexity for
  {R}ankin-{S}elberg {$L$}-functions}, Automorphic representations and
  {$L$}-functions, Tata Inst. Fundam. Res. Stud. Math., vol.~22, Tata Inst.
  Fund. Res., Mumbai, 2013, pp.~311--334. \MR{3156856}

\bibitem{HoZh}
Fei Hou and Meng Zhang, \emph{Hybrid bounds for {R}ankin-{S}elberg
  {$L$}-functions}, J. Number Theory \textbf{175} (2017), 21--41. \MR{3608176}

\bibitem{iwko04}
Henryk Iwaniec and Emmanuel Kowalski, \emph{Analytic number theory}, American
  Mathematical Society Colloquium Publications, vol.~53, American Mathematical
  Society, Providence, RI, 2004. \MR{2061214 (2005h:11005)}

\bibitem{kmvdk}
E.~Kowalski, P.~Michel, and J.~VanderKam, \emph{{Rankin-Selberg $L$-functions
  in the level aspect}}, Duke Math. J. \textbf{114} (2002), no.~1, 123--191.

\bibitem{mi04}
P.~Michel, \emph{The subconvexity problem for {R}ankin-{S}elberg
  {$L$}-functions and equidistribution of {H}eegner points}, Ann. of Math. (2)
  \textbf{160} (2004), no.~1, 185--236. \MR{2119720 (2006d:11048)}

\bibitem{mi07}
Philippe Michel, \emph{Analytic number theory and families of automorphic
  {$L$}-functions}, Automorphic forms and applications, IAS/Park City Math.
  Ser., vol.~12, Amer. Math. Soc., Providence, RI, 2007, pp.~181--295.
  \MR{2331346 (2008m:11104)}

\bibitem{mive10}
Philippe Michel and Akshay Venkatesh, \emph{The subconvexity problem for {${\rm
  GL}_2$}}, Publ. Math. Inst. Hautes \'Etudes Sci. (2010), no.~111, 171--271.
  \MR{2653249 (2012c:11111)}

\bibitem{munshi}
Ritabrata Munshi, \emph{The circle method and bounds for {$L$}-functions, {II}:
  {S}ubconvexity for twists of {${\rm GL}(3)$} {$L$}-functions}, Amer. J. Math.
  \textbf{137} (2015), no.~3, 791--812. \MR{3357122}

\bibitem{whwa96}
E.~T. Whittaker and G.~N. Watson, \emph{A course of modern analysis}, Cambridge
  Mathematical Library, Cambridge University Press, Cambridge, 1996, An
  introduction to the general theory of infinite processes and of analytic
  functions; with an account of the principal transcendental functions, Reprint
  of the fourth (1927) edition. \MR{1424469 (97k:01072)}

\bibitem{zh14}
Zhilin Ye, \emph{The {S}econd {M}oment of {R}ankin-{S}elberg {L}-functions,
  {H}ybrid {S}ubconvexity {B}ounds, and {R}elated {T}opics}, ProQuest LLC, Ann
  Arbor, MI, 2014, Thesis (Ph.D.)--The Ohio State University. \MR{3321959}

\end{thebibliography}

\end{document}